%\usepackage{pst-plot, pstricks, pst-node}
%% \documentclass[12pt]{article}% WC is trying this documentstyle 12 xi 2018
%%%% Local Macros %%%%
%% \newenvironment{proof}[1][Proof]{\noindent\textbf{#1.} }{\ \rule{0.5em}{0.5em}}

\documentclass[11pt]{amsart}%

\usepackage{amsfonts}
\usepackage{amsmath}
\usepackage{amssymb}
\usepackage{graphicx}
\usepackage{color,graphicx}%
\setcounter{MaxMatrixCols}{30}
%TCIDATA{OutputFilter=latex2.dll}
%TCIDATA{Version=5.50.0.2953}
%TCIDATA{CSTFile=amsart.cst}
%TCIDATA{Created=Monday, May 16, 2011 19:49:20}
%TCIDATA{LastRevised=Tuesday, March 19, 2019 18:53:43}
%TCIDATA{<META NAME="GraphicsSave" CONTENT="32">}
%TCIDATA{<META NAME="SaveForMode" CONTENT="1">}
%TCIDATA{BibliographyScheme=Manual}
%TCIDATA{<META NAME="DocumentShell" CONTENT="Standard LaTeX\Blank - Standard LaTeX Article">}
%TCIDATA{Language=American English}
%BeginMSIPreambleData
\providecommand{\U}[1]{\protect\rule{.1in}{.1in}}
%EndMSIPreambleData
\newtheorem{theorem}{Theorem}

\newtheorem{definition}[theorem]{Definition}

\newtheorem{lemma}[theorem]{Lemma}

\newtheorem{proposition}[theorem]{Proposition}
\newtheorem{remark}[theorem]{Remark}

\begin{document}
\title{Standing waves of fixed period for $n+1$ vortex filaments}
\author{Walter Craig}
\address{Department of Mathematics \& Statistics, McMaster University, Hamilton Ontario
L8S 4K1 \textsc{Canada}}
\author{Carlos Garc\'{\i}a-Azpeitia}
\address{Departamento de Matem\'{a}ticas, Facultad de Ciencias, Universidad Nacional
Aut\'{o}noma de M\'{e}xico, 04510 M\'{e}xico DF, M\'{e}xico}
\thanks{Walter Craig is deceased}
\subjclass[2010]{35B10, 35B32}
\keywords{Vortex filaments. Periodic solutions. Bifurcation.}

\begin{abstract}
The $n+1$ vortex filament problem has explicit solutions consisting of $n$
parallel filaments of equal circulation in the form of nested polygons
uniformly rotating around a central filament which has circulation of opposite
sign. We show that when the relation between temporal and spatial periods is
fixed at certain rational numbers, these configurations have an infinite
number of homographic time dependent standing wave patterns that bifurcate
from these uniformly rotating central configurations.

\end{abstract}
\maketitle

\section*{Introduction}

In reference \cite{KMD95}, a model system of equations was derived for the
interaction of near-parallel vortex filaments. The model considers vortex
filaments in $\mathbb{R}^{3}$ to be coordinatized by curves $(u_{j}%
(t,s),s)\in\mathbb{C}\times\mathbb{R}$ for $j=0,\dots,n$ that describe the
positions of $n+1$ vertically oriented vortex filaments. Different aspects of
this problem have been investigated in
\cite{BaMi,BaMi12,GaCr15,Ga17,GaIz12,Po03,Ne01} and references therein. In
this article we study central configurations of $n+1$ vortex filaments with
$n$ filaments of equal circulation and one filament of opposite circulation.

Let $u_{j}(t,s)$ for $j=1,...,n$ be the positions of the $n$ filaments of
circulation $1$ and $u_{0}(t,s)$ the filament of circulation $-\kappa$ with
$\kappa>0$. A homographic standing wave of the $n+1$ vortex filament problem
with fixed period is a solution of the form
\begin{equation}
u_{j}(t,s)=ae^{i\omega t}\left(  a_{j}+a_{j}u(t/q,s)\right)  ~, \label{SW}%
\end{equation}
where $\omega=-a^{-2}$ is real, $q$ is an integer and $u(t,s)$ is a complex
$2\pi$-periodic function in $t$ and $s$.

The complex numbers $a_{j}\in\mathbb{C}$ for $j=0,...,n$ lie in a central
configuration with $a_{0}=0$. That is, the complex numbers $a_{j}$ satisfy
\begin{equation}
0=\sum_{i=1}^{n}\frac{a_{i}}{\left\vert a_{i}\right\vert ^{2}},\qquad
-a_{j}=\sum_{i=1(i\neq j)}^{n}\frac{a_{j}-a_{i}}{\left\vert a_{j}%
-a_{i}\right\vert ^{2}}-\kappa\frac{a_{j}}{\left\vert a_{j}\right\vert ^{2}},
\label{cc}%
\end{equation}
for $j=1,...,n$. There are many configurations that satisfy \eqref{cc}, for
example in the form of nested polygons. In particular, an explicit solution of
(\ref{cc}) is given by the regular polygon
\[
a_{j}=\left(  \kappa-\left(  n-1\right)  /2\right)  ^{-1/2}e^{ij\zeta}%
,\qquad\zeta=2\pi/n,
\]
if $\kappa>\left(  n-1\right)  /2$.

Setting $u=0$ in equation \eqref{SW} corresponds to the family of homographic
solutions for which $n$ straight parallel filaments rotate around the central
filament with uniform frequency $\omega$ and amplitude $a$. The standing waves
of the title of this article correspond to non-trivial $2\pi$-periodic
solutions $u(t,s)$ of the equation $Lu+g(u)=0$, where $L$ is the linear
operator
\begin{equation}
L(\omega)u:=-\left(  i/q\right)  \partial_{t}u-\partial_{s}^{2}u+\omega\left(
u+\bar{u}\right)  \text{,}%
\end{equation}
and $g$ is an analytic nonlinearity describing the horizontal vortex filament
interaction. Our goal is to construct standing wave solutions that bifurcate
from the initial configuration $u=0$, for which the frequency $\omega$ is the
bifurcation parameter. The solution given by (\ref{SW}) with a $2\pi$-periodic
function $u$ has fixed spatial period $s\in\lbrack0,2\pi)$ and temporal period
$t\in\lbrack0,2\pi q)$ in a frame of reference that is rotating with frequency
$\omega$, i.e. the solution is periodic or quasiperiodic with the two temporal
frequencies $\omega$ and $1/q$ when observed in a stationary reference frame.
The main theorem is as follows.

\begin{theorem}
\label{1}Let $q$ be an integer. For each $k_{0}\in\mathbb{N}$, there is a
local continuum of $2\pi$-periodic solutions $u$ bifurcating from the
unperturbed configuration with $u=0$ and initial frequency
\begin{equation}
\omega_{0}=-\frac{1}{q}\left(  1-\frac{1}{2k_{0}^{2}q}\right)  \text{.}
\label{Om}%
\end{equation}
The local bifurcation $(u,\omega)$ consists of standing waves satisfying the
estimates
\[
u(t,s)=b\left[  \cos j_{0}t+i\left(  1-k_{0}^{-2}/q\right)  \sin
j_{0}t\right]  \cos k_{0}s+\mathcal{O}(b^{2}),
\]
with $\omega=\omega_{0}+\mathcal{O}(b^{2})$ and $j_{0}=qk_{0}^{2}-1$, where
$b\in\lbrack0,b_{0}]$ gives a local parameterization of the bifurcation curve.
Furthermore, these solutions satisfy the following symmetries
\begin{equation}
u(t,s)=\bar{u}(-t,s)=u(t,-s)\text{.}\nonumber
\end{equation}

\end{theorem}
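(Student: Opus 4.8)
The plan is to read the standing-wave equation as a bifurcation problem
\[
F(u,\omega):=L(\omega)u+g(u)=0
\]
posed on the closed subspace $X\subset H^{m}(\mathbb{T}^{2};\mathbb{C})$ (with $m$ large enough that the analytic nonlinearity $g$ acts smoothly) cut out by the two symmetries $u(t,s)=\bar{u}(-t,s)=u(t,-s)$. Since $L(\omega)$ has constant coefficients and $g$ is autonomous, both symmetries are preserved, so $F$ maps $X\times\mathbb{R}$ into $X$; the trivial family $u\equiv0$ solves $F(0,\omega)=0$ for every $\omega$; and, because $g(u)=\mathcal{O}(|u|^{2})$, the linearization is $D_{u}F(0,\omega)=L(\omega)$. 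The goal is then to verify the hypotheses of the Crandall--Rabinowitz theorem on bifurcation from a simple eigenvalue at $(u,\omega)=(0,\omega_{0})$.

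First I would carry out the spectral analysis of $L(\omega)$. Writing $u=\sum_{j,k}c_{jk}e^{ijt+iks}$ and noting that the two symmetries force $c_{jk}\in\mathbb{R}$ together with $c_{jk}=c_{j,-k}$, the term $\omega(u+\bar{u})$ couples the Fourier mode $(j,k)$ only to $(-j,-k)$. On each such coupled pair $L(\omega)$ is represented by a $2\times2$ symmetric matrix whose determinant equals $(k^{2}+\omega)^{2}-(j/q)^{2}-\omega^{2}=k^{4}+2k^{2}\omega-j^{2}/q^{2}$. Hence $L(\omega)$ is singular on the block $(j,k)$ exactly when
\[
\omega=\frac{j^{2}}{2k^{2}q^{2}}-\frac{k^{2}}{2},
\]
and the choice $k=k_{0}$, $j=j_{0}=qk_{0}^{2}-1$ reproduces the bifurcation value $\omega_{0}$ of \eqref{Om}. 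Transporting the null vector of the singular block back through the Fourier series produces precisely the profile $\varphi_{0}=[\cos j_{0}t+i(1-k_{0}^{-2}/q)\sin j_{0}t]\cos k_{0}s$ announced in the theorem.

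The main obstacle is to prove that the kernel of $L(\omega_{0})$ inside $X$ is one--dimensional, i.e.\ that no admissible mode other than $(\pm j_{0},\pm k_{0})$ is resonant at $\omega_{0}$. Clearing denominators in $\frac{j^{2}}{2k^{2}q^{2}}-\frac{k^{2}}{2}=\omega_{0}$ turns this into the Diophantine condition
\[
k_{0}^{2}j^{2}=k^{2}\left(k_{0}^{2}k^{2}q^{2}-2qk_{0}^{2}+1\right),
\]
and the task is to show that among positive integers the only solution is $(j,k)=(j_{0},k_{0})$. I expect this number--theoretic step to be the real difficulty, since a spurious extra integer solution would enlarge the kernel, create a quadratic resonance, and destroy the simplicity needed below; should isolated resonances occur, one would have to restrict the symmetry class further so as to decouple the offending mode from $k_{0}$.

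Granting a simple kernel, the rest is routine. For transversality I would use that $\partial_{\omega}L(\omega)u=u+\bar{u}$, so $\partial_{\omega}L(\omega_{0})\varphi_{0}=\varphi_{0}+\bar{\varphi}_{0}=2\cos j_{0}t\,\cos k_{0}s$; a one--line computation on the singular block shows that this pairs nontrivially with the one--dimensional cokernel and therefore lies outside the range of the Fredholm operator $L(\omega_{0})$, which is exactly the Crandall--Rabinowitz condition. The theorem then yields a smooth local curve $b\mapsto(u(b),\omega(b))$ with $u(b)=b\varphi_{0}+\mathcal{O}(b^{2})$ bifurcating from $(0,\omega_{0})$. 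Finally, because the quadratic self--interaction $g''(0)[\varphi_{0},\varphi_{0}]$ generates only the modes $(\pm2j_{0},\pm2k_{0})$, $(\pm2j_{0},0)$, $(0,\pm2k_{0})$ and $(0,0)$ --- none of which coincides with the kernel mode $(\pm j_{0},\pm k_{0})$ when $j_{0}\neq0$ --- its projection onto the cokernel vanishes, the first--order coefficient of $\omega$ in $b$ is absent, and one obtains $\omega(b)=\omega_{0}+\mathcal{O}(b^{2})$. The invariance of $X$ guarantees that the symmetries $u=\bar{u}(-t,s)=u(t,-s)$ persist along the whole branch, completing the proof.
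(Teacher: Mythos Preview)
Your overall strategy --- restrict to a symmetric subspace, check that the kernel of $L(\omega_0)$ is one–dimensional there, verify transversality, and invoke Crandall--Rabinowitz --- is the same as the paper's. But there is a genuine gap, and it is not where you think it is.

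\medskip
\textbf{The missing ingredient.} You treat $L(\omega_0)$ as a Fredholm operator on $X\subset H^m$ without justification. Since $L$ is a Schr\"odinger-type operator (first order in $t$, second order in $s$), it is not elliptic and its inverse on the complement of the kernel has \emph{no} smoothing property; you cannot get closed range or bounded $(PLP)^{-1}$ for free. The paper handles this by a direct eigenvalue estimate: writing $\lambda_{j,k,-1}(\omega)=k^2+\omega-\sqrt{(j/q)^2+\omega^2}$, one shows that for $\omega$ in the open interval $(-1/q,0)$ these eigenvalues satisfy $|\lambda_{j,k,-1}|\gtrsim\varepsilon$ off the kernel, uniformly in $(j,k)$. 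This is the crucial lemma; it gives a bounded (but not compact) inverse $PLP$ on $H^s$, after which the range equation is solved by contraction and Crandall--Rabinowitz is applied only to the resulting finite-dimensional bifurcation equation. Without this step your direct appeal to Crandall--Rabinowitz on the full PDE is not justified.

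\medskip
\textbf{The Diophantine worry is misplaced.} You flag kernel simplicity as ``the real difficulty'', expecting a hard number-theoretic argument. In fact the same interval $\omega_0\in(-1/q,0)$ that makes the inverse bounded also forces non-resonance, and the argument is elementary. For fixed $k\geq1$ the condition $\omega_0=\bigl((j/qk)^2-k^2\bigr)/2\in(-1/q,0)$ is equivalent to $qk^2-2<\sqrt{qk^2(qk^2-2)}<j<qk^2$, so the only admissible integer is $j=qk^2-1$. Then $\omega_0(k_0)=-\tfrac{1}{q}\bigl(1-\tfrac{1}{2qk_0^2}\bigr)$ is strictly monotone in $k_0$, so different $k_0$ give different $\omega_0$ and there is no extra resonance. (The zero mode $(j,k)=(0,0)$ has null eigenvector $i\cdot\mathrm{const}$, which your symmetry $u=\bar u(-t,s)$ already excludes.) Thus the kernel in your space $X$ is one-dimensional for every $k_0$, with no exceptional cases.

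\medskip
In short: the architecture of your proof is right, and your transversality and $\mathcal O(b^2)$ arguments are fine (the paper obtains the latter from an odd $\mathbb S^1$-symmetry rather than by computing quadratic projections, but either works). What you are missing is the uniform lower bound on the nonzero eigenvalues in the window $(-1/q,0)$; once you prove that lemma, both the functional-analytic setup and the simplicity of the kernel fall out together.
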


\begin{figure}[h]
\begin{center}
\resizebox{13cm}{!}{
\includegraphics{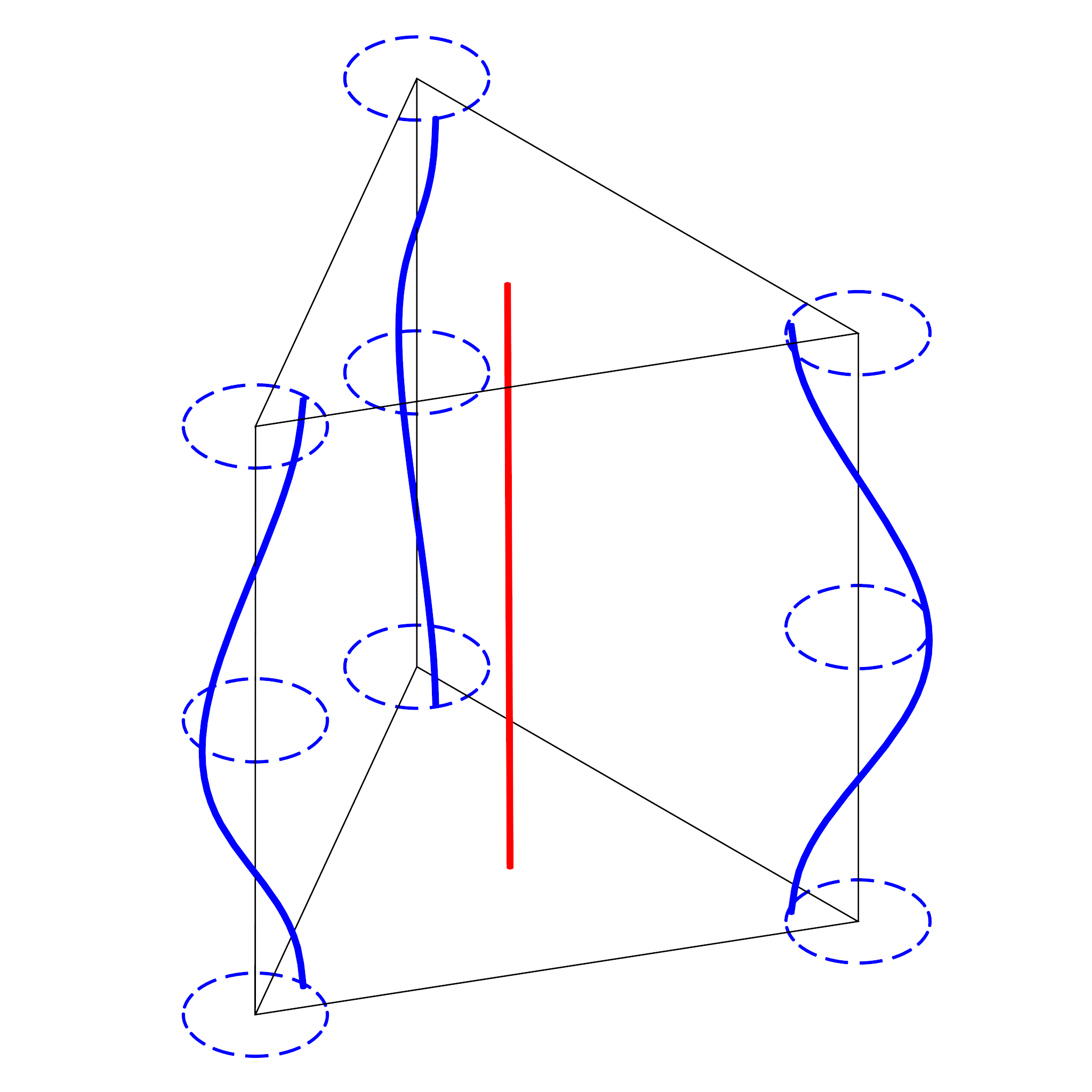}
\includegraphics{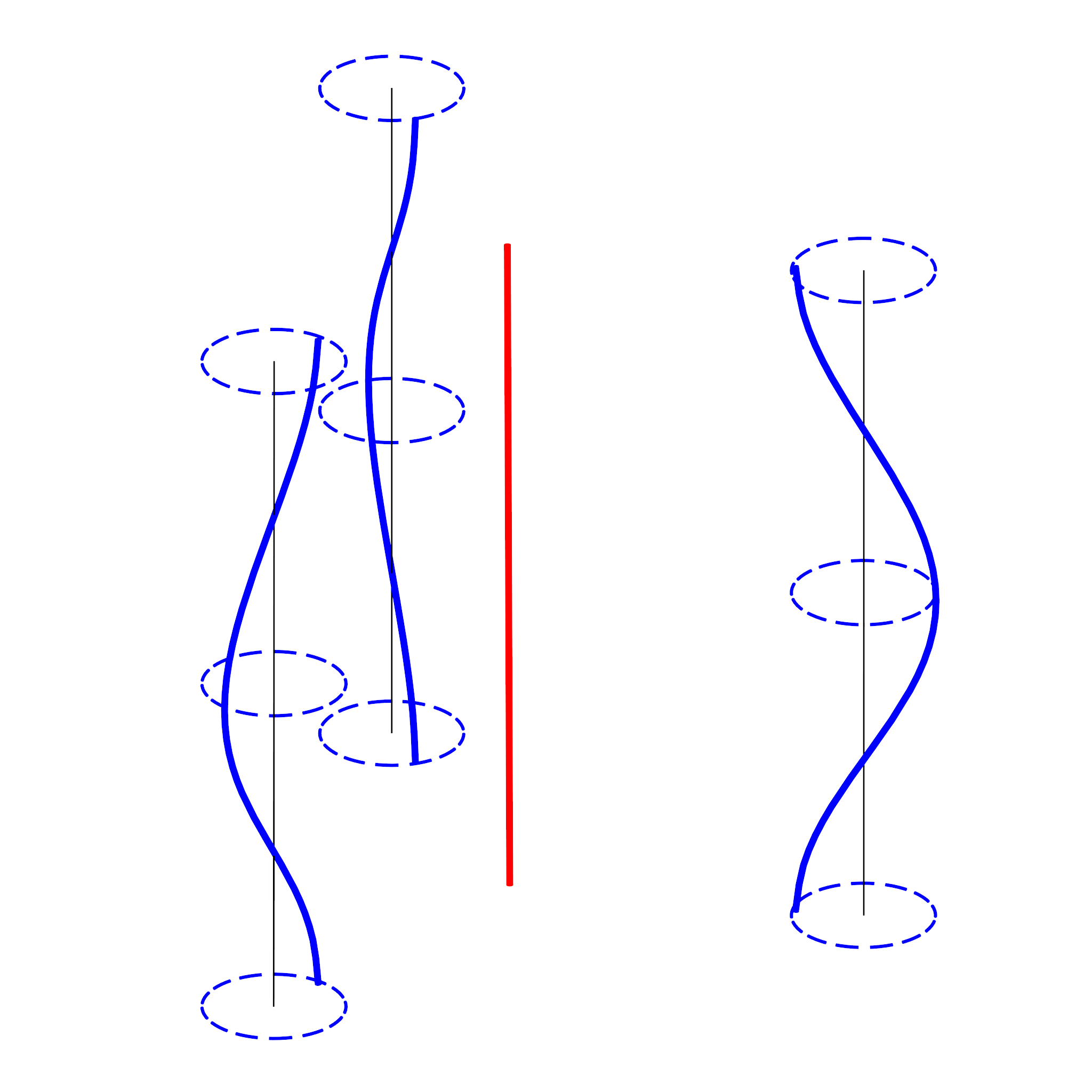}
}
\end{center}
\caption{Illustration of the standing waves obtained in Theorem 1 for the case of $n=3$ vortex filaments of
equal circulation (blue) and one vortex filament of opposite circulation (red).}
\end{figure}
Therefore, for any central configuration $a_{j}$ satisfying (\ref{cc}), the
previous theorem gives homographic solutions of the form (\ref{SW}). The
periodic solutions $u$ are special in that the ratio of their temporal and
spatial periods are rational. In reference \cite{GaCr15} we studied the case
of irrational ratios, which is a small divisor problem for a nonlinear partial
differential equation which requires techniques related to KAM theory even for
the case of constructions of periodic solutions. Our approach is parallel to
that of the semilinear wave and beam equation in one dimension, where time
periodic solutions with rational periods (free vibrations) were shown to exist
in \cite{AmZe80,Ar,Ki79,Ki00,Ra78}, and later for irrational periods in
\cite{Ba,Ra}. On the other hand, time periodic solutions bifurcating from
stationary solutions with irrational periods is a small divisor problem, for
which constructions of solutions by Nash-Moser methods came much later in
\cite{Be07,Bo95,CrWa93}, and references therein.

In the present analysis the ratio of the periodic solution is rational and the
small divisor problem does not occur. The key element of the proof consists on
the fact that for special temporal frequencies, given by $1/q$, the
Schr\"{o}dinger operator $L(\omega)$, when restricted to the orthogonal
complement of the null space, has a bounded inverse in the set of frequencies
$\omega_{0}\in$ $(-1/q,0)$. Unlike in semilinear wave and beam equations, our
equation is a genuine Hamiltonian PDE represented by a Schr\"{o}dinger
operator $L(\omega)$ which does not have the regularity that is usually
obtained in other equations, i.e. our result can be obtained only in a narrow
set of parameters where $L(\omega_{0})$ has a nontrivial kernel. This is also
the case of the counter-rotating vortex filament pair studied in \cite{Ga17},
but this is the first time that periodic solutions without small divisors are
obtained in a genuine non-linear Hamiltonian PDE using this method.

In section 1, we set up a Lyapunov-Schmidt reduction to prove the existence of
standing waves. In section 2 we solve the range equation for $\omega_{0}\in$
$(-1/q,0)$ using the contracting mapping theorem. In section 3 we use the
symmetries of the problem to solve the bifurcation equation by means of the
Crandall-Rabinowitz theorem.

\section{Setting the problem}

From \cite{KMD95} the system of model equations for the dynamics of $n+1$
near-parallel vortex filaments, with circulations $\Gamma_{0}=-\kappa$ and
$\Gamma_{j}=1$ for $j=1,...,n$, is given by
\begin{equation}
\partial_{t}u_{j}=i\left(  \Gamma_{j}\partial_{ss}u_{j}+\sum_{i=0~(i\not =%
j)}^{n}\Gamma_{j}\frac{u_{j}-u_{i}}{\left\vert u_{j}-u_{i}\right\vert ^{2}%
}\right)  ,\qquad j=0,...,n. \label{fp}%
\end{equation}
Homographic solutions of the $n+1$ filaments are particular solutions of the
form
\[
u_{j}(t,s)=w(t,s)a_{j}~,
\]
where $w(t,s)$ is a complex valued function and where $a_{j}$'s are complex
numbers satisfying the condition of a central configuration. In this class of
solutions the shape of the intersections of the filaments with a horizontal
complex plane is homographic with the shape of their intersection with any
other horizontal plane $\{x_{3}=c\}$ for any $c$ and at any time $t$.

For a general central configuration
\begin{equation}
-a_{j}=\sum_{i=0(i\not =j)}^{n}\Gamma_{i}\frac{a_{j}-a_{i}}{|a_{j}-a_{i}|^{2}%
}~,\qquad j=0,...,n, \label{Eqn:CentralConfiguration}%
\end{equation}
homographic solutions satisfy the system of equations (\ref{fp}) if $w(t,s)$
solves the system of equations
\[
a_{j}\partial_{t}w(t,s)=i\left(  \Gamma_{j}a_{j}\partial_{ss}w(t,s)-\frac
{w(t,s)}{\left\vert w(t,s)\right\vert ^{2}}a_{j}\right)  ,\qquad
j=0,...,n,\text{.}%
\]
In the particular case that $a_{0}=0$ in the central configuration, the
condition for the configuration $a_{j}$ becomes (\ref{cc}) and the system of
equations is satisfied by solutions of the simple equation,
\begin{equation}
\partial_{t}w=i\left(  \partial_{ss}w-\frac{w}{\left\vert w\right\vert ^{2}%
}\right)  \text{.} \label{pde}%
\end{equation}
Therefore, $u_{j}(t,s)=w(t,s)a_{j}$ is an homographic solution of the vortex
filament problem if the configuration $a_{j}$ satisfies (\ref{cc}) and $w$ is
a solution of the equation (\ref{pde})

A particular solution of (\ref{cc}) is given by a regular polygon
$a_{j}=re^{ij\zeta}$ with radius $r=\left(  \kappa-\left(  n-1\right)
/2\right)  ^{-1/2}$ if $\kappa>\left(  n-1\right)  /2$, because
\[
\sum_{i=1(i\neq j)}^{n}\frac{a_{j}-a_{i}}{\left\vert a_{j}-a_{i}\right\vert
^{2}}-\kappa\frac{a_{j}}{\left\vert a_{j}\right\vert ^{2}}=-\left(
\kappa-\frac{n-1}{2}\right)  \frac{a_{j}}{r^{2}}=-a_{j}~.
\]
Also, there are other solutions of (\ref{cc}) corresponding to nested polygons.

Equation \eqref{pde} has the set of solution $w=ae^{i\omega t}$ with
\[
\omega=-a^{-2}<0~,
\]
that corresponds to $n$ vortex filaments uniformly rotating in the central
configuration $a_{j}$ with amplitude $a$ and frequency $\omega$. We look for
bifurcation of solutions of the equation \eqref{pde} of the form
\begin{equation}
w(t,s)=ae^{i\omega t}(1+u(t/q,s)), \label{cov}%
\end{equation}
where $q$ is an integer and $u(t,s)$ is $2\pi$-periodic in $t$ and $s$. This
is a solution that has fixed temporal and spatial periodicity when viewed in a
coordinate frame rotating about the $x_{3}$-axis with frequency $\omega$. When
$u=0$ the solution corresponds to $n$ vortex filaments uniformly rotating in
the central configuration $a_{j}$. The equation \eqref{pde} for a perturbation
from this configuration is
\begin{equation}
\left(  i/q\right)  \partial_{t}u=-u_{ss}+\omega(u+\bar{u})+g(u,\overline
{u})~\text{,} \label{pdev}%
\end{equation}
where the nonlinearity $g$ is given by
\[
g(u,\bar{u})=\omega\frac{\bar{u}^{2}}{1+\bar{u}}=\omega\sum_{n=2}^{\infty
}(-1)^{n}\bar{u}^{n}~.
\]

In order to simplify the analysis of symmetries, the equation is represented
in real coordinates $u(\tau,s)=(x(\tau,s),y(\tau,s))\in\mathbb{R}^{2}$, i.e.,
the equation is equivalent to
\[
Lu+g(u)=0,
\]
where $g(u)=\mathcal{O}\left(  \left\vert u\right\vert ^{2}\right)  $ is
analytic for $\left\vert (x,y)\right\vert <1$ and $L$ is the linear operator
\begin{equation}
Lu:=-\left(  1/q\right)  J\partial_{t}u-\partial_{s}^{2}u+\omega\left(
I+R\right)  u\text{,}%
\end{equation}
where $R=diag(1,-1)$.

We define the Hilbert space $L^{2}({\mathbb{T}}^{2};\mathbb{R}^{2})$, with the
inner product%
\[
\left\langle u_{1},u_{2}\right\rangle =\frac{1}{(2\pi)^{2}}\int_{{\mathbb{T}%
}^{2}}u_{1}\cdot u_{2}\,dtds\text{.}%
\]
A function $u\in L^{2}$ can be written in a Fourier basis as
\[
u=\sum_{(j,k)\in\mathbb{Z}^{2}}u_{j,k}e^{i(jt+ks)},\qquad u_{j,k}=\bar
{u}_{-j,-k}\in\mathbb{C}^{2}.
\]
The Sobolev space $H^{s}$ is the usual subspace of functions in $L^{2}$ with
bounded norm
\[
\left\Vert u\right\Vert _{H^{s}}^{2}=\sum_{(j,k)\in\mathbb{Z}^{2}}\left\vert
u_{j,k}\right\vert ^{2}\left(  j^{2}+k^{2}+1\right)  ^{s}\text{.}%
\]
This space has the Banach algebra property for $s>1$,%
\[
\left\Vert uv\right\Vert _{H^{s}}\leq C\left\Vert u\right\Vert _{H^{s}%
}\left\Vert v\right\Vert _{H^{s}}~.
\]
The Banach algebra property implies that the nonlinear operator
$g(u)=\mathcal{O}(\left\Vert u\right\Vert _{H^{s}}^{2})$ is well defined and
continuous for $\left\Vert u\right\Vert _{H^{s}}<1$.

The linear operator $L:D(L)\rightarrow H^{s}$ is continuous when the domain%
\[
D(L)=\{u\in H^{s}:Lu\in H^{s}\}~,
\]
is completed under the graph norm
\[
\left\Vert u\right\Vert _{L}^{2}=\left\Vert Lu\right\Vert _{H^{s}}%
^{2}+\left\Vert u\right\Vert _{H^{s}}^{2}~.
\]
In Fourier basis, the operator $L:D(L)\rightarrow H^{s}$ is given by
\[
Lu=\sum_{(j,k)\in\mathbb{Z}^{2}}M_{j,k}u_{j,k}e^{i(jt+ks)}~,
\]
%%% missing section
where
\[
M_{j,k}=\left(
\begin{array}
[c]{cc}%
k^{2}+2\omega & i\left(  j/q\right) \\
-i\left(  j/q\right)  & k^{2}%
\end{array}
\right)  \text{.}%
\]
Then, the eigenvalues and eigenvectors of $L$ are
\begin{align}
\lambda_{j,k,l}  &  =k^{2}+\omega+l\sqrt{\left(  j/q\right)  ^{2}+\omega^{2}%
}~,\\
e_{j,k,l}  &  =\left(
\begin{array}
[c]{c}%
-\omega-l\sqrt{\left(  j/q\right)  ^{2}+\omega^{2}}\\
i\left(  j/q\right)
\end{array}
\right)  ,
\end{align}
for $(j,k,l)\in\mathbb{Z}^{2}\times\mathbb{Z}_{2}$, where $\mathbb{Z}%
_{2}=\{1,-1\}$ is a group under the product.

The eigenvalue $\lambda_{j,k,1}$ always is positive, and $\lambda
_{j,k,-1}(\omega_{0})=0$ if%

\[
\omega_{0}=\left(  \left(  j/qk\right)  ^{2}-k^{2}\right)  /2<0.
\]
Given that $L(\omega_{0})$ has a nontrivial kernel, we expect bifurcation of
solutions of $L(\omega)u+g(u)=0$ as $\omega$ crosses $\omega_{0}$.

\begin{definition}
We define $N$ as the subset of all lattice points corresponding to zero
eigenvalues,
\[
N(\omega_{0})=\left\{  \left(  j,k,-1\right)  \in\mathbb{Z}^{2}\times
\mathbb{Z}_{2}:\lambda_{j,k,-1}\left(  \omega_{0}\right)  =0\right\}
~\text{.}
\]

\end{definition}

By definition we have that the kernel of $L(\omega_{0})$ is generated by
eigenfunctions $e_{j,k,l}e^{i(jt+ks)}$ with $\left(  j,k,l\right)  \in N$.
Notice that additional sites to $(\pm j_{0},\pm k_{0},-1)$ may be present in
$N(\omega_{0})$ due to resonances. The Lyapunov-Schmidt reduction separates
the kernel and the range equations using the projections
\[
Qu = \sum_{\left(  j,k,l\right)  \in N} u_{j,k,l}e_{j,k,l}e^{i(jt+ks)}
~,\qquad Pu = (I-Q)u ~.
\]
Setting
\[
u = v+w ~, \qquad v = Qu ~, \qquad w = Pu~\text{,}%
\]
%%%%%
the equation $Lu+g(u)=0$ is equivalent to the kernel equation
\begin{equation}
QLQv + Qg(v+w) = 0~,
\end{equation}
and the range equation
\begin{equation}
PLPw + Pg(v+w)= 0~.
\end{equation}

\section{The range equation}

In this section, the range equation is solved as a fixed point $w(\omega,v)\in
H^{s}$ of the operator
\[
Kw=-\left(  PLP\right)  ^{-1}g(w+v,\omega)~\text{.}%
\]
The local solution $w=w(\omega,v)$ is provided by an application of the
contraction mapping theorem, where we only need to prove that $\left(
PLP\right)  ^{-1}:PH^{s}\rightarrow PH^{s}$ is well defined and bounded. For
this, we will establish bound estimates in the eigenvalues $\lambda_{j,k,l}$.

For $l=1$, we clearly have%
\[
\lambda_{j,k,1}=k^{2}+\omega+\sqrt{\left(  j/q\right)  ^{2}+\omega^{2}}\gtrsim
k^{2}+\left\vert j\right\vert ~\text{.}%
\]
For $l=-1$, we have the following estimate,

\begin{lemma}
For $2\varepsilon< \left\vert \omega\right\vert < 1/q-2\varepsilon$, we have
\begin{equation}
\left\vert \lambda_{j,k,-1}(\omega)\right\vert \gtrsim\varepsilon\text{ for
}\left(  j,k,l\right)  \in N^{c}\text{.}%
\end{equation}

\end{lemma}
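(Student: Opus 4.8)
The plan is to reduce the estimate to a lower bound on the distance from the running frequency $\omega$ to the resonant frequencies at which each eigenvalue vanishes, and then to classify which of those frequencies can lie in $(-1/q,0)$. First I would dispose of the case $k=0$ directly: there $\lambda_{j,0,-1}=\omega-\sqrt{(j/q)^{2}+\omega^{2}}\leq 2\omega<0$, so $|\lambda_{j,0,-1}|\geq 2|\omega|>4\varepsilon$ on the window. For $k\neq 0$ and $\omega\in(-1/q,0)$ one has $k^{2}+\omega\geq 1-1/q> 0$, so squaring the relation $\lambda_{j,k,-1}=0$ is legitimate and rationalizing gives
\[
\lambda_{j,k,-1}(\omega)=\frac{q^{2}k^{4}+2q^{2}k^{2}\omega-j^{2}}{q^{2}\big[(k^{2}+\omega)+\sqrt{(j/q)^{2}+\omega^{2}}\,\big]},
\]
whose denominator is bounded below by $q^{2}k^{2}>0$, so the sign of $\lambda_{j,k,-1}$ is that of its numerator.

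The next step is a monotonicity observation that turns the problem into one about distances. Differentiating, $\partial_{\omega}\lambda_{j,k,-1}=1-\omega/\sqrt{(j/q)^{2}+\omega^{2}}\in[1,2]$ for $\omega<0$, so for each fixed $(j,k)$ with $k\neq 0$ the map $\omega\mapsto\lambda_{j,k,-1}(\omega)$ increases strictly from $-\infty$ to $k^{2}$ and vanishes at a unique point
\[
\omega_{j,k}=\tfrac12\big((j/qk)^{2}-k^{2}\big).
\]
The lower bound on the derivative yields the key inequality $|\lambda_{j,k,-1}(\omega)|\geq|\omega-\omega_{j,k}|$, so it suffices to keep $\omega$ at distance $\gtrsim\varepsilon$ from the zero $\omega_{j,k}$ of every mode in $N^{c}$.

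The decisive structural fact is that a zero $\omega_{j,k}$ can fall in the open interval $(-1/q,0)$ only for a very thin set of modes. Writing $-1/q<\omega_{j,k}<0$ in integer form gives $(qk^{2})^{2}-2qk^{2}<j^{2}<(qk^{2})^{2}$, i.e. $(qk^{2}-1)^{2}-1<j^{2}<(qk^{2})^{2}$; the only perfect square in this range is $(qk^{2}-1)^{2}$, which forces $|j|=qk^{2}-1$. Hence every mode outside this resonant band has $\omega_{j,k}\leq-1/q$ or $\omega_{j,k}\geq 0$, and since the window confines $\omega$ to $(-1/q+2\varepsilon,-2\varepsilon)$ we get $|\omega-\omega_{j,k}|\geq 2\varepsilon$, and therefore $|\lambda_{j,k,-1}(\omega)|\geq 2\varepsilon$, for all of them at once.

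What remains, and what I expect to be the main obstacle, is the resonant band $|j|=qk^{2}-1$, whose zeros are exactly the candidate bifurcation frequencies $\omega_{k}=-1/q+1/(2q^{2}k^{2})$. The map $k\mapsto\omega_{k}$ is strictly monotone, so these frequencies are distinct and accumulate only at the endpoint $-1/q$ that the window removes; inside the window they therefore form a discrete, separated set. The single band mode lying in the kernel is discarded on passing to $N^{c}$, and for the remaining band modes I would bound $|\omega-\omega_{k}|$ below using this separation together with the fact that $\omega$ is taken near the fixed bifurcation value $\omega_{0}$. This is exactly the point at which the rationality of the period is used: in contrast with the irrational case of \cite{GaCr15}, the resonant frequencies are discrete away from $-1/q$, so the divisors $|\omega-\omega_{k}|$ stay bounded below and no small-divisor difficulty arises. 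Combined with the bound $\lambda_{j,k,1}\gtrsim k^{2}+|j|$ already recorded for $l=1$, this gives the boundedness of $(PLP)^{-1}$ on $PH^{s}$ needed for the contraction mapping.
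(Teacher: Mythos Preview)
Your argument is correct and arrives at the desired bound, but it proceeds along a genuinely different route from the paper's. The paper never introduces the zeros $\omega_{j,k}$ or the monotonicity of $\omega\mapsto\lambda_{j,k,-1}(\omega)$. Instead it writes
\[
\lambda_{j,k,-1}=(k^{2}-|j|/q)+\big(|j|/q+\omega-\sqrt{(j/q)^{2}+\omega^{2}}\,\big)
\]
and splits on whether $|j|/q=k^{2}$. When $|j|/q\neq k^{2}$ the first bracket has modulus at least $1/q$ while the second tends to $\omega$ as $|j|\to\infty$, yielding $|\lambda|\geq 1/q-|\omega|-\varepsilon\geq\varepsilon$ for large $|j|+|k|$; when $|j|/q=k^{2}$ one gets $|\lambda|\geq|\omega|-\varepsilon$ for large $|k|$. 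The finitely many remaining modes in $N^{c}$ are then disposed of by a compactness argument. Your approach is more structural: by locating the unique zero of each eigenvalue and showing that only the band $|j|=qk^{2}-1$ can place its zero inside $(-1/q,0)$, you obtain the uniform bound for all non-band modes at once and isolate precisely the finite family of band modes that forces one to work near the fixed $\omega_{0}$---a point the paper's proof leaves implicit. This makes the mechanism by which the rational period suppresses small divisors completely transparent, at the price of a somewhat longer argument. One small technical caveat: your mean-value inequality $|\lambda_{j,k,-1}(\omega)|\geq|\omega-\omega_{j,k}|$ relies on $\partial_{\omega}\lambda\geq 1$, which you have only for $\omega<0$; when $\omega_{j,k}\geq 0$ (i.e.\ $|j|\geq qk^{2}$) the interval of integration meets $[0,\infty)$ where the derivative drops below $1$. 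In that case simply observe $\sqrt{(j/q)^{2}+\omega^{2}}\geq|j|/q\geq k^{2}$, so $\lambda_{j,k,-1}(\omega)\leq\omega$ and hence $|\lambda|\geq|\omega|>2\varepsilon$, which is all you need.
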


\begin{proof}
In the case $\left\vert j\right\vert /q\neq k^{2}$, the inequality $\left\vert
k^{2}-\left\vert j\right\vert /q\right\vert \geq1/q$ holds and
\[
\left\vert \lambda_{j,k,-1}(\omega_{0})\right\vert \geq\left\vert
k^{2}-\left\vert j\right\vert /q\right\vert -\left\vert \left\vert
j\right\vert /q+\omega-\sqrt{\left(  j/q\right)  ^{2}+\omega^{2}}\right\vert
~.
\]
Since $\lim_{x\rightarrow\infty}\left(  x+\omega-\sqrt{x^{2}+\omega^{2}%
}\right)  =\omega$, then
\[
\left\vert \left\vert j\right\vert /q+\omega-\sqrt{\left(  j/q\right)
^{2}+\omega^{2}}\right\vert <\left\vert \omega\right\vert +\varepsilon~,
\]
for $\left\vert k\right\vert +\left\vert j\right\vert \geq M$ with $M$ big
enough. Therefore,
\[
\left\vert \lambda_{j,k,-1}(\omega)\right\vert \geq\left\vert k^{2}-\left\vert
j\right\vert /q\right\vert -\left\vert \omega\right\vert -\varepsilon\geq
\frac{1}{q}-\left\vert \omega\right\vert -\varepsilon\geq\varepsilon~.
\]
In the case $\left\vert j\right\vert /q=k^{2}$, then
\[
\left\vert \lambda_{j,k,-1}(\omega)\right\vert =\left\vert k^{2}+\omega
-\sqrt{k^{2}+\omega^{2}}\right\vert \geq\left\vert \omega\right\vert
-\varepsilon\geq\varepsilon~.
\]
for $\left\vert k\right\vert $ big enough. In both cases we have that
$\left\vert \lambda_{j,k,-1}(\omega)\right\vert \geq\varepsilon$ if
$\left\vert k\right\vert +\left\vert j\right\vert \geq M$ with $M$ big enough.
We conclude that the estimate holds except by a finite number of points
$\left(  j,k\right)  \in\mathbb{Z}^{2}$. Therefore, there is a constant $c$
such that the estimate $\left\vert \lambda_{j,k,-1}(\omega)\right\vert \geq
c\varepsilon$ holds for all $(j,k,-1)\in N^{c}$.
\end{proof}

From the previous estimates we have that $\left(  PLP\right)  ^{-1}$ is a
bounded operator with
\[
\left\Vert \left(  PLP\right)  ^{-1}w\right\Vert _{H^{s}}\lesssim
\varepsilon^{-1}\left\Vert w\right\Vert _{H^{s}}\text{.}%
\]

\begin{proposition}
\label{2}Assume $2\varepsilon<\left\vert \omega\right\vert <1/q-2\varepsilon$.
There is a unique continuous solution $w(v,\omega)\in H^{s}$ of the range
equation defined for $(v,\omega)$ in a small neighborhood of $(0,\omega
)\in\ker L(a_{0})\times\mathbb{R}$ such that
\begin{equation}
\left\Vert w(v,\omega)\right\Vert _{H^{s}}\lesssim\varepsilon^{-1}\left\Vert
v\right\Vert ^{2}\text{,}%
\end{equation}
for small $\varepsilon$.
\end{proposition}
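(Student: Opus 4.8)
The plan is to set up a contraction mapping argument for the operator $Kw = -(PLP)^{-1}g(w+v,\omega)$ on a suitable closed ball in $PH^s$. The previous lemma gives us the crucial bound $\|(PLP)^{-1}\|_{PH^s \to PH^s} \lesssim \varepsilon^{-1}$ uniformly for $2\varepsilon < |\omega| < 1/q - 2\varepsilon$, so the entire argument rests on controlling the nonlinearity $g$ via the Banach algebra property of $H^s$ for $s > 1$. I want to show $K$ maps a small ball $B_\rho = \{w \in PH^s : \|w\|_{H^s} \le \rho\}$ into itself and is a contraction there, then invoke the contraction mapping theorem to get a unique fixed point $w(v,\omega)$, and finally read off the quadratic estimate $\|w\|_{H^s} \lesssim \varepsilon^{-1}\|v\|^2$ from the fixed-point identity.

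Let me think about the nonlinearity. Since $g(u) = \mathcal{O}(|u|^2)$ is analytic for $|u| < 1$ and $H^s$ is a Banach algebra, we have estimates of the form $\|g(u)\|_{H^s} \lesssim \|u\|_{H^s}^2$ and the Lipschitz bound $\|g(u_1) - g(u_2)\|_{H^s} \lesssim (\|u_1\|_{H^s} + \|u_2\|_{H^s})\|u_1 - u_2\|_{H^s}$, valid for $\|u_i\|_{H^s}$ bounded away from $1$. Writing $u = v + w$ with $\|v\|$ small and $\|w\|_{H^s} \le \rho$, the self-mapping estimate becomes
\[
\|Kw\|_{H^s} \le C\varepsilon^{-1}\|v+w\|_{H^s}^2 \le C\varepsilon^{-1}(\|v\| + \rho)^2,
\]
and the contraction estimate becomes
\[
\|Kw_1 - Kw_2\|_{H^s} \le C\varepsilon^{-1}(\|v\| + \rho)\|w_1 - w_2\|_{H^s}.
\]
Choosing $\rho$ and the neighborhood of $v = 0$ small relative to $\varepsilon$—say requiring $C\varepsilon^{-1}(\|v\|+\rho) \le 1/2$—makes $K$ a contraction of $B_\rho$ into itself, which yields the unique fixed point $w(v,\omega)$. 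Continuity in $(v,\omega)$ follows from the uniform contraction together with continuous dependence of $(PLP)^{-1}$ and $g$ on their arguments.

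**The quadratic bound is the payoff** and comes directly from the fixed-point equation: since $w = Kw = -(PLP)^{-1}g(v+w)$, and the fixed point satisfies $\|w\|_{H^s} \le \rho$ with $\rho$ small, we expand $g(v+w) = g(v) + \mathcal{O}((\|v\|+\rho)\|w\|)$ and absorb the $w$-dependent remainder into the left side (possible because its coefficient is $\le 1/2$), leaving $\|w\|_{H^s} \lesssim \varepsilon^{-1}\|g(v)\|_{H^s} \lesssim \varepsilon^{-1}\|v\|^2$. The main obstacle I anticipate is bookkeeping the interplay between $\varepsilon$ and the radii: because the inverse blows up like $\varepsilon^{-1}$, the admissible ball radius $\rho$ and the neighborhood of $v$ must shrink as $\varepsilon \to 0$, so one must be careful that the "small $\varepsilon$" regime is handled consistently and that the constants hidden in $\lesssim$ do not secretly depend on $\varepsilon$ in a way that breaks the estimate. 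Apart from this quantitative care, the argument is a routine application of the contraction principle once the linear bound from the lemma and the Banach algebra estimates on $g$ are in hand.
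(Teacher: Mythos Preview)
Your proposal is correct and follows essentially the same route as the paper: set up $Kw=-(PLP)^{-1}g(v+w)$, use the lemma's bound $\|(PLP)^{-1}\|\lesssim\varepsilon^{-1}$ together with the Banach algebra estimates on $g$ to obtain a contraction on a ball of radius $\rho=\mathcal{O}(\varepsilon)$, and read off the quadratic estimate from $\|Kw\|_{H^{s}}\lesssim\varepsilon^{-1}(\|w\|_{H^{s}}^{2}+\|v\|^{2})$. Your treatment is in fact more explicit than the paper's about the Lipschitz estimate, the contraction constant, and the absorption step yielding $\|w\|_{H^{s}}\lesssim\varepsilon^{-1}\|v\|^{2}$.
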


\begin{proof}
By the Banach algebra property of $H^{s}$, the operator
\[
g(w)=\mathcal{O}(\left\Vert w\right\Vert _{H^{s}}^{2}):B_{\rho}\rightarrow
H^{s}%
\]
is well define in the domain $B_{\rho}=\{w\in H^{s}:\left\Vert w\right\Vert
_{H^{s}}<\rho\}$ for $\rho<1$. We can chose a small enough $\varepsilon$ such
that the hypothesis of the previous lemma hold true. Therefore,
\begin{align*}
Kw  &  =-\left(  PLP\right)  ^{-1}g(w+v,\omega)=\mathcal{O}(\varepsilon
^{-1}\left\Vert w\right\Vert _{H^{s}}^{2})\\
K  &  :B_{\rho}\subset PH^{s}\rightarrow PH^{s}\text{,}%
\end{align*}
is well defined and continuous. Moreover, it is a contraction for $\rho$ of
order $\rho=\mathcal{O}(\varepsilon)$. By the contraction mapping theorem,
there is a unique continuous fixed point $w(v,\omega)\in B_{\rho}$. The
estimate $\left\Vert w(v,\omega)\right\Vert _{H^{s}}\leq\varepsilon
^{-1}\left\Vert v\right\Vert ^{2}$ is obtained from
\[
\left\Vert Kw\right\Vert _{H^{s}}\lesssim\varepsilon^{-1}\left(  \left\Vert
w\right\Vert _{H^{s}}^{2}+\left\Vert v\right\Vert ^{2}\right)  .
\]

\end{proof}

\begin{remark}
Since $\left(  PLP\right)  ^{-1}$ is continuous but not compact, we do not
automatically obtain the regularity of the solutions by bootstrapping
arguments. Instead,the regularity is obtained using the Sobolev embedding
$H^{s}\subset C^{2}$ for $s\geq3$.
\end{remark}

\section{The bifurcation equation}

\begin{proposition}
For $k_{0}\in\mathbb{N}$, we define
\begin{equation}
\omega_{0}=-\frac{1}{q}\left(  1-\frac{1}{2qk_{0}^{2}}\right)  ,\qquad
j_{0}=qk_{0}^{2}-1.
\end{equation}
For these frequencies we have $\omega_{0}\in\left(  -1/q,0\right)  $ and
\[
N(\omega_{0})=\{(0,0,-1),\left(  \pm j_{0},\pm k_{0},-1\right)  \}.
\]

\end{proposition}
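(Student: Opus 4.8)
The plan is to translate membership in $N(\omega_0)$ into a Diophantine problem and then eliminate every lattice point except the advertised ones. I would first dispose of the bound $\omega_0\in(-1/q,0)$, which is elementary: since $q,k_0\ge1$ we have $0<\tfrac{1}{2qk_0^2}\le\tfrac12<1$, so $1-\tfrac{1}{2qk_0^2}\in(0,1)$, and multiplying by $-1/q$ gives $\omega_0\in(-1/q,0)$. This also records the estimate $|\omega_0|<1/q\le1$ that I will use repeatedly.

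For the characterization of $N$, I would exploit the factorization $\det M_{j,k}=\lambda_{j,k,1}\,\lambda_{j,k,-1}=k^{2}(k^{2}+2\omega)-(j/q)^{2}$, which follows directly from the explicit eigenvalues. Because $\omega_0<0$ one has $\lambda_{j,k,1}\ge k^{2}+\omega_0+|\omega_0|=k^{2}\ge1$ for every $k\neq0$, so the positive factor $\lambda_{j,k,1}$ never vanishes off the zero column; hence for $k\neq0$ the kernel condition $\lambda_{j,k,-1}(\omega_0)=0$ is equivalent to the single algebraic equation $\det M_{j,k}=0$, i.e.
\[
j^{2}=q^{2}k^{2}\bigl(k^{2}+2\omega_0\bigr).
\]
The degenerate column $k=0$ I would treat by hand: there $\det M_{j,0}=-(j/q)^{2}$ vanishes only at $j=0$, and $(0,0)$ always produces the one–dimensional phase–direction kernel spanned by $(0,1)^{\mathsf T}$, recorded in the paper's bookkeeping as $(0,0,-1)\in N(\omega_0)$.

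The heart of the argument is the integer analysis of the displayed equation. Substituting $2\omega_0=-\tfrac2q+\tfrac{1}{q^{2}k_0^{2}}$ yields
\[
j^{2}=q^{2}k^{4}-2qk^{2}+\frac{k^{2}}{k_0^{2}}.
\]
Since the left-hand side and the first two terms on the right are integers, $k^{2}/k_0^{2}\in\mathbb Z$, so $k_0^{2}\mid k^{2}$ and hence $k_0\mid k$; writing $k=mk_0$ gives $j^{2}=m^{2}D$ with $D:=(mqk_0^{2})^{2}-2qk_0^{2}+1$, so that $j\in\mathbb Z$ forces $D$ to be a perfect square. Setting $x:=qk_0^{2}\ge1$: for $m=0$ we get $k=0$ and then $j=0$; for $|m|=1$ we get $D=(x-1)^{2}$, hence $j=\pm(x-1)=\pm j_0$ and $k=\pm k_0$, recovering exactly $(\pm j_0,\pm k_0,-1)$. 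The remaining case $|m|\ge2$ is the step I expect to be the main obstacle, and it is settled by squeezing $D$ between consecutive squares: one checks $(|m|x)^{2}-D=2x-1>0$ and $D-(|m|x-1)^{2}=2x(|m|-1)>0$, so $(|m|x-1)^{2}<D<(|m|x)^{2}$, whence $D$ cannot be a perfect square and no integer $j$ exists. Since every lattice point with $k\neq0$ lies on the $l=-1$ branch by the positivity of $\lambda_{j,k,1}$ established above, this exhausts all cases and gives $N(\omega_0)=\{(0,0,-1),(\pm j_0,\pm k_0,-1)\}$.
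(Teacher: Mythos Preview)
Your proof is correct and complete; the determinant factorization, the divisibility step $k_0\mid k$, and the squeeze $(|m|x-1)^2<D<(|m|x)^2$ for $|m|\ge2$ all check out.

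Your route, however, is genuinely different from the paper's. You cast the kernel condition as the Diophantine equation $j^{2}=q^{2}k^{4}-2qk^{2}+k^{2}/k_0^{2}$ and eliminate spurious $k$'s by an integrality argument followed by trapping $D$ between consecutive squares. The paper instead argues in two analytic steps: first, for any $(j,k)$ with $k\neq0$ and $\lambda_{j,k,-1}(\omega_0)=0$, the constraint $\omega_0\in(-1/q,0)$ pins down $j=qk^{2}-1$ (because $(qk^{2}-1)^{2}\le j^{2}<(qk^{2})^{2}$, an inequality equivalent to the range of $\omega_0$); second, the resulting frequency $\omega_0(k)=-\tfrac{1}{q}\bigl(1-\tfrac{1}{2qk^{2}}\bigr)$ is strictly decreasing in $k$, so the map $k\mapsto\omega_0(k)$ is injective and $k=k_0$ is forced. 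Your Diophantine argument is more explicit and self-contained, and it makes transparent exactly why no extra lattice points appear; the paper's monotonicity argument is shorter and avoids any number-theoretic bookkeeping, at the cost of leaving the verification that $\omega_0\in(-1/q,0)$ forces $j=qk^{2}-1$ somewhat implicit.
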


\begin{proof}
Since $\lambda_{j,k,-1}=k^{2}+\omega-\sqrt{\left(  j/q\right)  ^{2}+\omega
^{2}}$, then $\lambda_{j,0,-1}(\omega)=0$ only if $j=0$. For $k_{0}%
\in\mathbb{N}^{+}$, the condition $\lambda_{j_{0},k_{0},-1}(\omega_{0})=0$
$\ $is satisfied only if
\[
\omega_{0}=\left(  \left(  j_{0}/qk_{0}\right)  ^{2}-k_{0}^{2}\right)
/2\text{.}%
\]
In addition, the condition $\omega_{0}\in\left(  -1/q,0\right)  $ holds if an
only if the lattice point $\left(  j_{0},k_{0}\right)  \in\mathbb{N}^{2}$
satisfies $j_{0}=qk_{0}^{2}-1$. In this case
\[
\omega_{0}=\frac{1}{2}\left(  \left(  k_{0}-\frac{1}{qk_{0}}\right)
^{2}-k_{0}^{2}\right)  =-\frac{1}{q}\left(  1-\frac{1}{2qk_{0}^{2}}\right)  ,
\]
then the frequency $\omega_{0}$ is determined uniquely for each point $\left(
j_{0},k_{0}\right)  \in\mathbb{N}^{2}$ because $\omega_{0}$ is decreasing in
$k_{0}$. Therefore, we have that $(0,0,-1)$ and $\left(  \pm j_{0},\pm
k_{0},-1\right)  $ are the only elements in $N(\omega_{0})$.
\end{proof}

Since $\ker L(\omega_{0})$ has dimension $5$ for $\omega_{0}\in\left(
-1/q,0\right)  $, we need to reduce the bifurcation equation to a subspace of
dimension one in order to apply the Crandall-Rabinowitz theorem. This is
attained by exploiting the equivariance of the system \eqref{pdev} under the
action of the group $G=O(2)\times O(2)$ given by
\[
\rho(\tau,\sigma)u(t,s)=u(t+\tau,s+\sigma)~,
\]
for the abelian components, and for the reflections,
\[
\rho(\kappa_{1})u(t,s)=Ru(-t,s),\quad\rho(\kappa_{2})u(t,s)=u(t,-s)~,
\]
where $R=diag(1,-1)$. By the uniqueness of $w(v,\omega)$, the bifurcation
equation has the same equivariant properties as the differential equation.
This property is used in the following proposition to reduce the bifurcation
equation to a subspace of dimension one.

\begin{proposition}
\label{3}The bifurcation equation has a local continuum of $2\pi$-periodic
solution bifurcating from $(v,\omega)=(0,\omega_{0})$ with estimates
\begin{equation}
v(t,s)=b\left(
\begin{array}
[c]{c}%
\cos j_{0}t\\
\left(  1-k_{0}^{-2}/q\right)  \sin j_{0}t
\end{array}
\right)  \cos k_{0}s+\mathcal{O}(b^{2})\text{,\qquad}\omega=\omega
_{0}+\mathcal{O}(b^{2}),
\end{equation}
where $b\in\lbrack0,b_{0}]$ gives a parameterization of the local bifurcation,
and symmetries
\begin{equation}
v(t,s)=Rv(-t,s)=v(t,-s)=v(t+\pi/j_{0},s+\pi/k_{0}).
\end{equation}

\end{proposition}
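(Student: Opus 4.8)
The plan is to solve the five-dimensional bifurcation equation $QLQv+Qg(v+w(v,\omega))=0$ on $\ker L(\omega_0)$ by exploiting the $G=O(2)\times O(2)$ equivariance to restrict to a one-dimensional fixed-point subspace, and then to apply the Crandall--Rabinowitz theorem on that subspace. First I would record the group action on the kernel. Since the eigenvector $e_{j,k,-1}$ depends on $k$ only through $k^2$, we have $e_{j_0,k_0,-1}=e_{j_0,-k_0,-1}=:\xi$, and the relation $\sqrt{(j_0/q)^2+\omega_0^2}=k_0^2+\omega_0$ coming from $\lambda_{j_0,k_0,-1}(\omega_0)=0$ gives, after normalization, $\xi=\bigl(1,\,i(1-k_0^{-2}/q)\bigr)$. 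A general element of $\ker L(\omega_0)$ is then
\[
v=A\,\xi\,e^{i(j_0t+k_0s)}+B\,\xi\,e^{i(j_0t-k_0s)}+\text{c.c.}+c_0e_0,
\]
with $A,B\in\mathbb{C}$, $c_0\in\mathbb{R}$, and $e_0$ the zero eigenvector at $(0,0)$; this exhibits $\dim_{\mathbb{R}}\ker L(\omega_0)=5$.

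Next I would compute the fixed-point subspace of $H=\langle\kappa_1,\kappa_2\rangle$. Under $\kappa_1$, which acts by $u\mapsto Ru(-t,s)$, one has $R\xi=\bar\xi$ and $Re_0=-e_0$, so $\kappa_1$-invariance forces $c_0=0$ and $B=\bar A$; under $\kappa_2$, acting by $s\mapsto -s$, invariance forces $A=B$. Hence $A=B=a\in\mathbb{R}$, the space $V_H:=\mathrm{Fix}(H)\cap\ker L(\omega_0)$ is one-dimensional, and its generator $v_*$ is precisely the leading profile displayed in the statement. The remaining translation $\gamma=\rho(\pi/j_0,\pi/k_0)$ acts on every kernel mode by the factor $e^{\pm i\pi}e^{\mp i\pi}=1$, so it fixes $V_H$ pointwise and the solutions we construct automatically carry that symmetry. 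Crucially, the translation $\rho(\pi/j_0,0)$ multiplies each kernel mode by $e^{ij_0\cdot\pi/j_0}=-1$, hence acts as $-\mathrm{Id}$ on $V_H$; this will force the reduced bifurcation function to be odd.

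I would then reduce and apply Crandall--Rabinowitz. By the uniqueness in Proposition~\ref{2}, the range solution $w(v,\omega)$ inherits the $H$-symmetry, so for $v\in V_H$ we have $v+w\in\mathrm{Fix}(H)$ and $Qg(v+w)\in\ker L(\omega_0)\cap\mathrm{Fix}(H)=V_H$; thus the bifurcation equation restricts to a scalar map $\phi(a,\omega)=0$ with $\phi(0,\omega)\equiv0$ and, by the preceding paragraph, $\phi(-a,\omega)=-\phi(a,\omega)$. Its linear part is $\partial_a\phi(0,\omega)=\lambda_{j_0,k_0,-1}(\omega)$, which vanishes at $\omega_0$; since each $M_{j,k}$ is Hermitian, $L$ is self-adjoint and kernel and cokernel coincide, so the Crandall--Rabinowitz transversality condition reduces to
\[
\frac{d}{d\omega}\lambda_{j_0,k_0,-1}(\omega_0)=1-\frac{\omega_0}{k_0^2+\omega_0}>0,
\]
using $\omega_0<0<k_0^2+\omega_0$. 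This yields a local branch $(a,\omega(a))$ through $(0,\omega_0)$; the oddness of $\phi$ makes $\omega(a)$ even, giving $\omega=\omega_0+\mathcal{O}(b^2)$, while the leading term of $v$ is $b\,v_*$, which is the asserted expansion, and the symmetries are exactly those defining $\mathrm{Fix}(H)$ together with $\gamma$.

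The main obstacle is organizing the symmetry reduction so that Crandall--Rabinowitz genuinely applies. Because $G$ contains the continuous torus $SO(2)\times SO(2)$ of translations, the bare kernel is five-dimensional and any nontrivial solution comes in group orbits, so a direct appeal to the simple-eigenvalue theorem fails. Restricting to $\mathrm{Fix}(H)$ breaks the torus down to a finite group acting trivially on $V_H$, which simultaneously reduces the problem to a simple eigenvalue crossing and, through $\rho(\pi/j_0,0)$, supplies the oddness that pins the branch to $\omega=\omega_0+\mathcal{O}(b^2)$. The remaining points to verify carefully are the $G$-equivariance of the nonlinearity $g$ and the fact that $w(v,\omega)$ respects $\mathrm{Fix}(H)$, so that the reduced map lands back in $V_H$; both follow from the equivariance of the equation and the uniqueness of the fixed point obtained in Proposition~\ref{2}.
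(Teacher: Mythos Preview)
Your argument is correct and follows the same overall strategy as the paper: restrict the $G=O(2)\times O(2)$--equivariant bifurcation equation to a one-dimensional fixed-point subspace of $\ker L(\omega_0)$ and apply Crandall--Rabinowitz there. The organization differs only in minor ways. You restrict by the subgroup $H=\langle\kappa_1,\kappa_2\rangle$, while the paper uses $S=\langle\kappa_1,\kappa_2,(\pi/j_0,\pi/k_0)\rangle$; since $\gamma=(\pi/j_0,\pi/k_0)$ acts trivially on the nonzero kernel modes, both choices yield the same one-dimensional $V_H$, and your observation that $\gamma$ fixes $V_H$ pointwise recovers the extra symmetry in the statement. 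Your elimination of the zero mode via $Re_0=-e_0$ (with $e_0=(0,1)^T$ the genuine null vector of $M_{0,0}$) is in fact cleaner than the paper's phrasing, and your transversality check through $\partial_\omega\lambda_{j_0,k_0,-1}(\omega_0)\neq 0$ is equivalent, by self-adjointness, to the paper's verification that $(I+R)f\notin\mathrm{Ran}\,L(\omega_0)$. Finally, your use of $\rho(\pi/j_0,0)$ to obtain the oddness $\phi(-a,\omega)=-\phi(a,\omega)$ is the robust version of the paper's $\varphi=\pi$ argument and yields $\omega=\omega_0+\mathcal{O}(b^2)$ for all $k_0,q$.
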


\begin{proof}
In Fourier components
\[
v=\sum_{\left(  j,k,l\right)  \in N}u_{j,k,l}e_{j,k,l}e^{i(jt+ks)},\qquad
u_{j,k,l}=\bar{u}_{-j,-k,l},
\]
the action of the abelian part of the group $G$ is given by
\[
\rho(\varphi)u_{j,k,l}=e^{ij\varphi}u_{j,k,l},\qquad\rho(\theta)u_{j,k,l}%
=e^{ik\theta}u_{j,k,l}~.
\]
Since
\[
e_{j,k,-1}=\left(
\begin{array}
[c]{c}%
-\omega_{0}-\sqrt{\left(  j/q\right)  ^{2}+\omega_{0}^{2}}\\
i\left(  j/q\right)
\end{array}
\right)  =\left(
\begin{array}
[c]{c}%
k^{2}\\
i(j/q)
\end{array}
\right)  ,
\]
then $Re_{j,k,-1}=e_{-j,k,-1}$ and $e_{j,k,-1}=e_{j,-k,-1}$. Therefore, we
have
\[
\rho(\kappa_{1})v=\sum_{\left(  j,k,l\right)  \in N}u_{j,k,l}e_{-j,k,-1}%
e^{i(-jt+ks)}=\sum_{\left(  j,k,l\right)  \in N}u_{-j,k,l}e_{j,k,-1}%
e^{i(jt+ks)},
\]
and%
\[
\rho(\kappa_{2})v=\sum_{\left(  j,k,l\right)  \in N}u_{j,k,l}e_{j,k,-1}%
e^{i(jt-ks)}=\sum_{\left(  j,k,l\right)  \in N}u_{j,-k,l}e_{j,k,-1}%
e^{i(jt+ks)}\text{.}%
\]
Therefore, the action of the reflections in Fourier components is given by%
\[
\rho(\kappa_{1})u_{j,k,l}=u_{-j,k,l}=\bar{u}_{j,-k,l}~,\qquad\rho(\kappa
_{2})u_{j,k,l}=u_{j,-k,l}~.
\]

The irreducible representations under the action of $O(2)\times O(2)$
corresponds to the subspaces%
\[
(u_{j_{0},k_{0},-1},u_{j_{0},-k_{0},-1})\in\mathbb{C}^{2}.
\]
The linear operator $L$ is diagonal in these irreducible representations with
eigenvalue $\lambda_{j_{0},k_{0},-1}$ of complex multiplicity two. The group
\[
S=\left\langle \kappa_{1},\kappa_{2},(\pi/j_{0},\pi/k_{0})\right\rangle
\]
has fixed point space $(u_{j,k,-1},u_{j,-k,-1})=(b,b)$ for $b\in\mathbb{R}$ in
this representation. By setting
\[
\ker L^{S}(\omega_{0}):=\ker L(\omega_{0})\cap\emph{Fix~}(S)~\text{,}%
\]
the bifurcation equation%
\begin{equation}
QLQw+Qg(v+w(v,\omega)):\ker L^{S}(\omega_{0})\times\mathbb{R}\rightarrow\ker
L^{S}(\omega_{0}) \label{BE}%
\end{equation}
is well defined by the equivariance properties. Moreover, since $u_{0,0,-1}%
$\emph{ }is not fixed by the subgroup $S$, then $\ker L^{S}(\omega_{0})$ is
generated by the simple eigenfunction
\[
\sum_{j=\pm j_{0},k=\pm k_{0}}e_{j,k,-1}e^{i(jt+ks)}=4\left(
\begin{array}
[c]{c}%
k_{0}^{2}\cos j_{0}t\\
j_{0}/q\sin j_{0}t
\end{array}
\right)  \cos k_{0}s\text{.}%
\]

Since $\ker L^{S}(\omega_{0})$ has dimension one, the local bifurcation for
$\omega$ close to $\omega_{0}$ follows from the Crandall-Rabinowitz theorem
applied to the bifurcation equation (\ref{BE}). It is only necessary to verify
that $\partial_{\omega}L(\omega)f$ is not in the range of $L$ for $f\in\ker
L^{S}(\omega_{0})$, which follows from the fact that
\[
\partial_{\omega}L(\omega)f=\left(  I+R\right)  f~.
\]
The estimates $\omega=\omega_{0}+\mathcal{O}(b)$ and
\[
v(t,s)=b\left(
\begin{array}
[c]{c}%
\cos j_{0}t\\
\left(  1-k_{0}^{-2}/q\right)  \sin j_{0}t
\end{array}
\right)  \cos k_{0}s+\mathcal{O}(b^{2})
\]
are consequence of the Crandall-Rabinowitz estimates. Moreover, the
${\mathbb{S}}^{1}$-action of the element $\varphi=\pi$ in the kernel generated
is given by $\rho(\varphi)=-1$. This symmetry implies that the bifurcation
equation is odd and $\omega=\omega_{0}+\mathcal{O}(b^{2})$.
\end{proof}

The main theorem follows from this proposition and the fact that
$u=v+w(v,\omega)$ with
\[
\left\Vert w(v,\omega)\right\Vert _{H^{s}}=\mathcal{O}\left(  \left\Vert
v\right\Vert ^{2}\right)  =\mathcal{O}\left(  b^{2}\right)  .
\]

\vskip0.25cm \textbf{Acknowledgements.} W.C. was partially supported by the
Canada Research Chairs Program and NSERC through grant number 238452--16.
C.G.A was partially supported by a UNAM-PAPIIT project IN115019. We acknowledge the assistance of Ramiro Chavez Tovar with the preparation
of the figure.


\begin{thebibliography}{99}                                                                                               %


\bibitem {AmZe80}H. Amann, E. Zehnder. \emph{Nontrivial Solutions for a Class
of Nonresonance Problems and Applications to Nonlinear Differential
Equations.} Ann. Scuol. Norm. Sup. Pisa Cl. Sci (4), 8 (1980), pp. 539--603.

\bibitem {Ar}G. Arioli, H. Koch. \emph{Families of Periodic Solutions for Some
Hamiltonian PDEs}. SIAM J. Appl. Dyn. Syst., 16(1) (2017) 1--15.

\bibitem {BaMi}V. Banica, E. Faou, E. Miot. \emph{Collision of almost parallel
vortex filaments.} Comm. Pure Appl. Math. 70 (2016) 378-405.

\bibitem {BaMi12}V. Banica, E. Miot. \emph{Global existence and collisions for
symmetric configurations of nearly parallel vortex filaments.} Ann. Inst. H.
Poincar\'{e} Anal. Non Lin\'{e}aire 29(5) (2012) 813--832.

\bibitem {Ba}D. Bambusi.\emph{ Lyapunov Center Theorem For Some Nonlinear
PDEs: A Simple Proof}. Ann. Scuola Norm. Sup. Pisa Cl. Sci 4 (1999) 823--837.

\bibitem {Be07}M. Berti. \emph{Nonlinear Oscillations of Hamiltonian PDEs.
}Progress in Nonlinear Differential Equations and Their Applications,
Birkh\"{a}user, 2007.

\bibitem {Bo95}J. Bourgain. \emph{Construction of periodic solutions of
nonlinear wave equations in higher dimension}. Geometric and Functional
Analysis. 5(4) (1995) 629-639.

\bibitem {GaCr15}W. Craig, C. Garc\'{\i}a-Azpeitia, C-R. Yang.\emph{\ Standing
waves in near-parallel vortex filaments.}Communications in Mathematical
Physics, 350 (2017) 175-203

\bibitem {CrWa93}W.~Craig and C.E.~Wayne.\emph{\ Newton's method and periodic
solutions of nonlinear wave equations.} Commun. Pure Appl. Math., 46(11)
(1993) 1409--1498.

\bibitem {Ra}R. de La LLave. \emph{Variational methods for quasi-periodic of
partial differential equations.} Hamiltonian Systems and Celestial Mechanics.
World Scientific Pub Co Pte Lt, 2000, pp. 214-228.

\bibitem {Ga17}C. Garc\'{\i}a-Azpeitia. \emph{Standing waves in a
counter-rotating vortex filament pair.} J. of Differential Equations 264
(2018) 3918-3932

\bibitem {GaIz12}C. Garc\'{\i}a-Azpeitia, J. Ize.\emph{\ Bifurcation of
periodic solutions from a ring configuration in the vortex and filament
problems.} J. Differential Equations 252 (2012) 5662-5678.

\bibitem {Po03}C. Kenig, G. Ponce, L. Vega. \emph{On the interaction of nearly
parallel vortex filaments.} Commun. Math. Phys., 243(3) (2003) 471--483.

\bibitem {Ki79}H. Kielh\"{o}fer. \emph{Bifurcation of periodic solutions for a
semilinear wave equation.} Journal of Mathematical Analysis and Applications
68 (1979) 408--420.

\bibitem {Ki00}H. Kielh\"{o}fer. \emph{Nonlinear Standing and Rotating Waves
on the Sphere.} Journal of Differential Equations 166 (2000) 402-442.

\bibitem {KMD95}R. Klein, A. Majda, K. Damodaran. \emph{Simplified equations
for the interaction of nearly parallel vortex filaments.} J. Fluid Mech. 288
(1995) 201-248.

\bibitem {Ne01}P. Newton. \emph{The $N$-vortex problem. Analytical
techniques}. Applied Mathematical Sciences, 145. Springer-Verlag, New York, 2001.

\bibitem {Ra78}P. Rabinowitz. \emph{Free Vibration for a Semilinear Wave
Equation.} Comm. Pure. Appl. Math. 31 (1978) 31-68.
\end{thebibliography}
\end{document}